\documentclass[reqno,11pt]{amsart}
\usepackage{amsmath}
\usepackage{amssymb}
\usepackage{latexsym}
\usepackage{amsthm}
\usepackage{fullpage}
\usepackage[colorinlistoftodos]{todonotes}
\usepackage{marginnote}
\usepackage{enumitem}
\usepackage{mathtools}
\usepackage{dsfont}
\usepackage{mdframed}
\usepackage{xcolor}
\usepackage{hyperref}
\usepackage{esint}
\usepackage{amsfonts,latexsym,amsmath,amscd,geometry}
\numberwithin{equation}{section}

\newcommand{\R}{\mathbb{R}}

\newcommand{\eps}{\varepsilon}

\newtheorem{theorem}{Theorem}[section]

\newtheorem{remark}[theorem]{Remark}

\numberwithin{equation}{section}
\numberwithin{figure}{section}

\newcommand{\abs}[1]{\left\vert{#1}\right\vert}

\def\intave#1{\int_{#1}\hbox{\llap{$\raise2.3pt\hbox{\vrule
height.9pt width7pt}\phantom{\scriptstyle{#1}}\mkern-2mu$}}}

\begin{document}
\author{Fang-Hua Lin}
\address{Courant Institute of Mathematical Sciences, New York University, NY 10012, USA}
\email{linf@math.nyu.edu}
\author{Changyou Wang}
\address{Department of Mathematics, Purdue University, West Lafayette, IN 47907, USA}
\email{wang2482@purdue.edu}
\date{February 26,  2026}
\title{Remarks on the heat flow of harmonic maps into CAT(0)-spaces}
\begin{abstract} 
In this paper, we present an alternate, elementary proof of the local Lipschitz regularity of the suitable weak solution of heat flow of harmonic maps into CAT(0)-metric spaces, whose existence was established by Lin, Segatti, Sire, and Wang through an elliptic regularization approach. The ideas of the proof are inspired by Korevaar and Schoen,
and they work for any CAT(0)-metric space $(X,d)$ as the target and any complete Riemanan manifold $(M,g)$, with positive injectivity radius and bounded curvature, as the domain. 
\end{abstract}
\maketitle

\section{Introduction}

In the seminal paper \cite{Eells-Sampson1964}, Eells and Sampson first introduced the heat flow of harmonic maps between compact Riemannian manifolds $(M, g)$ and $(N, h)$ without boundaries. When the sectional curvature of $(N, h)$ is non-positive, it was shown by \cite{Eells-Sampson1964} that for any initial map $u_0\in C^\infty(M,N)$, there exists a unique smooth solution $u\in C^\infty(M\times [0,\infty), N)$ of the heat flow of harmonic map such that $u(\cdot, t)\xrightarrow{t_k\nearrow\infty}u_\infty$ in $C^2(M, N)$ and
$u_\infty$ is a minimizing harmonic map in the free homotopy class
$[u_0]$ (see also related work by Hartman \cite{Hartman1967} and Hamilton \cite{Hamilton1975}). 

In his 1984's paper \cite{Schoen1984}, Schoen established, under the assumption $(N,h)$ has non-positive curvature,
the convexity of Dirichlet energy functional $E(u(t)): [0,1]\to [0,\infty)$ for any $u\in C^\infty(M\times [0,1],N)$ satisfying that
for $x\in M$, $u(x,\cdot):[0,1]\to N$ is a constant speed geodesic. This in turn provides a variational approach of the existence of minimizing harmonic maps in any free homotopy class $\alpha\in [M, N]$,
when $(N,h)$ has non-positive curvature. In order to establish rigidity theorems of geometric groups,  Gromov-Schoen \cite{GromovSchoen1992} initiated the study of harmonic map into  
non-positively curved (NPC) metric spaces $(X,d)$, and showed, in particular, its Lipschitz regularity when $(X,d)$ are Euclidean buildings 
(i.e., locally finite Riemannian simplicial complexes). Subsequently, Korevaar-Schoen \cite{Korevaar-Schoen1993, Korevaar-Schoen1997} developed 
a general theory of minimizing harmonic maps into CAT(0)-metric spaces and established their Lipschitz regularity, see also the work
by Jost \cite{Jost1994}. Since then, we have seen many interesting development on harmonic maps into CAT(0)-metric spaces, especially further allowing the domains to be Alexandrov spaces with curvature bounded from below. We refer the interested reader to Chen \cite{Chen1995},  Jost \cite{Jost1996}, Lin \cite{Lin1997}, Daskalopoulos-Messe \cite{DM2009, DM2010, DM2012}, Zhang-Zhu \cite{ZhangZhu2018}, Gigli \cite{Gigli2022},
and Mondino-Semola \cite{Mondino-Semola2022}.

On the other hand, it has been a natural, but difficult problem how to extend Eells-Sampson's theorem and develop a successful theory of heat flow of harmonic maps into CAT(0)-metric spaces. There were fewer
results available in the literature. U. Mayer \cite{Mayer1998} extended the Crandall-Liggett method to construct global weak solutions
of the negative gradient flow of Dirichlet energy functional for maps into CAT(0)-metric spaces, that enjoy the semi-group property, but left
the question of regularity for such solutions unanswered. Very recently, in a joint work with A. Segatti and Y. Sire \cite{LSSW2025}, we developed a new method, namely, the elliptic regularization approach of the heat flow of harmonic maps, and proved the existence of a unique global suitable weak solution of the heat flow of harmonic maps into CAT(0)-metric spaces $(X,d)$, that is H\"older continuous when $(X,d)$ additionally is locally compact. Furthermore, we introduced the parabolic version of Almgren type frequency functions for such a heat flow of harmonic maps and showed it is a monotonically increasing function. As an immediate application of such a monotonicity property, we proved the spatial Lipschitz regularity of any suitable weak solution of the heat flow of harmonic maps into a wide class of CAT(0)-metric spaces. 

Now we briefly review the main results by \cite{LSSW2025}. First, through this paper, $(X,d)$ is assumed to be a CAT(0)-metric space, that is, $(X,d)$ is a complete and separable metric space, that satisfies
the following two conditions:
\begin{itemize}
\item[(i)] $(X,d)$ is a complete length space: For any pair of points $P, Q\in X$, $d(P,Q)$ equals to the length of a rectifiable curve $\gamma:[0,1]\to X$
such that $\gamma(0)=P$ and $\gamma(1)=Q$. Such a $\gamma$ is called a  geodesic between $P$ and $Q$.

\item[(ii)] For any three points $P, Q, R\in X$, let $\gamma_{Q,R}$ be a geodesic between $Q$ and $R$. For $0<\lambda<1$,
let $Q_\lambda\in\Gamma_{Q,R}$ be such that $d(Q_\lambda, Q)=\lambda d(Q,R)$ and $d(Q_\lambda, R)=(1-\lambda)d(Q,R)$, then it holds that 
\begin{equation}\label{NPC1}
d^2(P, Q_\lambda)\le (1-\lambda)d^2(P,Q)+\lambda d^2(Q, R)-\lambda(1-\lambda)d^2(Q,R).
\end{equation}
\end{itemize}

Let $(M,g)$ be a compact (or complete) $n$-dimensional Riemannian manifold without boundary. We closely follow Korevaar-Schoen \cite{Korevaar-Schoen1993} 
for the definitions of Sobolev spaces between $M$ (or $M\times (0,\infty)$) and $X$, and of the Dirichlet energy density 
$|\nabla u|^2$ (or $|\partial_t u|^2$) as well as the directional energy density $|u_*(Z)|^2$ for a map $u\in H^1(M,X)$, where $Z$ is a smooth tangential vector field $Z$ on $M$. Recall the (generalized) Dirichlet energy functional $E: L^2(M, X)\to [0,\infty]$ is defined by
\[
E(u)=\begin{cases}\frac12 \int_M |\nabla u|^2\,dv_g, & u\in H^1(M,X),\\
+\infty &  \mbox{ otherwise in } L^2(M.X),
\end{cases}
\]
with domain $D(E)=H^1(M,X)$.
For $u_0\in H^1(M,X)$ and $\eps>0$, let $u_\eps\in \mathfrak{V}_{u_0}$ be the (unique) minimizer of the Weighted Energy Dissipation (or WED) energy functional 
\begin{equation}
\label{eq:wed_intro_0}
\mathcal{I}_\eps(v)=\frac{1}{2}\iint_{M\times\R_+}\frac{e^{-t/\eps}}{\eps}\left(\eps\abs{\partial_t v}^2+\abs{\nabla v}^2\right)\,dv_gdt
\end{equation}
over the configuration function space
\begin{equation}
\label{eq:Hut0}
{\mathfrak{V}_{{u}_0}= \left\{v:M\times\R_+\to X: \,v(0)= {u}_0,  
\,\,\iint_{M\times\R_+}(\abs{\partial_t v}^2 +\abs{\nabla v}^2)e^{-t/\eps}\,dv_gdt<+\infty\right\}}.
\end{equation}

It was shown by \cite[Section 4.3]{LSSW2025} that $\{u_\eps\}$ is  bounded in $H^1_{\rm{loc}}(M\times (0,\infty))$, that is
\begin{align}\label{energybounded}
\iint_{M\times (0,\infty)} |\partial_t u_\eps|^2\,dv_gdt\le E(u_0),
\ \ \ \iint_{M\times [0,T]} |\nabla u_\eps|^2\,dv_gdt\le (T+\eps)E(u_0),
\ \ \forall T>0.
\end{align}

Recall the induced $L^2$-distance on $L^2(M,X)$ by $(X,d)$ is given by
\[d_2(u,v)=\Big(\int_M d^2(u(x), v(x))\,dv_g\Big)^\frac12, 
\ \ \forall u,v\in L^2(M,X).\]

Recall that by \cite[Definition 1.2]{LSSW2025}, $u\in {\rm{AC}}^2\big([0,\infty), L^2(M,X))\cap L^\infty(0,\infty), H^1_{\rm{loc}}(M, X)\big)$ is called a suitable weak solution of the heat flow of harmonic maps from $(M,g)$ to $(X,d)$ with an initial condition
$u_0\in H^1(M,X)$, if $u\big|_{t=0}=u_0$ and if $u$ satisfies
the Evolution Variational Inequality (EVI):
\begin{align}\label{EVI0}
\frac12\frac{d}{dt} d_2^2(u(t), v)+E(u(t)\le E(v)
\ \ {\rm{in}}\ \ \mathcal{D}'(0,\infty), \ \forall v\in H^1(M,X).
\end{align}

The following theorem was proven by \cite{LSSW2025}.
\begin{theorem}\label{convergence} Let $(X,d)$ be a CAT(0)-metric space, 
and $u_0\in H^1(M, X)$. If $u_\eps={\rm{argmin}}\big\{\mathcal{I}_\eps(u),
\ u\in \mathfrak{V}_{u_0}\big\}$, then there exists a unique suitable weak solution $u$ of the heat flow of harmonic map from $(M,g)$ to $(X,d)$
such that
\[
u_\eps\xrightarrow{\eps\to 0} u \ \ {\rm{in}}\ \ L^2(M\times (0,\infty)).
\]
If $(X,d)$ additionally is locally compact, then $u\in C^\alpha(M\times (0,\infty))$ for some $0<\alpha<1$.   
\end{theorem}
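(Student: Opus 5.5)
The plan is to take the existence and uniqueness from the elliptic regularization scheme, using compactness in $\eps$ and the $\mathrm{CAT}(0)$ convexity of the energies, and then derive Hölder continuity from a parabolic Campanato-type argument that uses local compactness of $X$. Throughout we use the uniform bounds \eqref{energybounded}.

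\textbf{Compactness.} By \eqref{energybounded}, $\{u_\eps\}$ is bounded in $H^1(M\times(0,T),X)$ for every $T>0$. The Korevaar--Schoen Rellich-type compactness theorem then gives $L^2$-precompactness in $M\times(0,T)$. It requires either local compactness of $X$ or an equi-integrability plus convexity argument; in the general $\mathrm{CAT}(0)$ case I would not use compactness. Instead I would show that $\{u_\eps\}$ is Cauchy in $L^2$, as follows.

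\textbf{Cauchy property via the Euler--Lagrange variational inequality.} Since $\mathcal{I}_\eps$ is convex along geodesic homotopies (by Schoen/Korevaar--Schoen convexity of energies into NPC spaces, together with \eqref{NPC1}), the minimizer satisfies the variational inequality $\mathcal{I}_\eps(u_\eps)\le \mathcal{I}_\eps(w)-\tfrac12\iint \frac{e^{-t/\eps}}{\eps}\big(\eps|\partial_t d(u_\eps,w)|^2+|\nabla d(u_\eps,w)|^2\big)$ for competitors $w$, by $1$-convexity of $d^2$. Comparing $u_\eps$ with time-shifts and with $u_{\eps'}$, and integrating in $t$ against suitable weights, yields a discrete EVI of the form $\frac12\frac{d}{dt}d_2^2(u_\eps(t),v)+E(u_\eps(t))\le E(v)+o(1)$ in a weak, time-averaged sense. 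Applying this with $v$ replaced by the other solution, the standard doubling-of-variables argument for EVI gradient flows (as in Ambrosio--Gigli--Savaré) gives $\sup_{t\le T}d_2(u_\eps(t),u_{\eps'}(t))\to0$. Passing to the limit with lower semicontinuity of $E$ gives the EVI \eqref{EVI0} for the limit $u$, so $u$ is a suitable weak solution. The regularity $u\in AC^2$ and $u\in L^\infty_tH^1$ follows from \eqref{energybounded}, using that $E(u_\eps(t))$ is nonincreasing on average. Uniqueness is the standard EVI contraction: for two solutions $u,\tilde u$, testing each EVI with the other and adding gives $\frac{d}{dt}d_2^2(u,\tilde u)\le0$. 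I expect this step to be the main obstacle, because deriving the approximate EVI from the WED minimality at the metric level, without tangent-space linearization, requires delicate choices of time-weighted competitors built by geodesic interpolation.

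\textbf{Hölder continuity when $X$ is locally compact.} I would prove uniform local estimates for the $u_\eps$ themselves. Each $u_\eps$ minimizes an elliptic energy in $(x,t)$ with weight $e^{-t/\eps}$; after rescaling $t=\eps s$ it is an honest weighted harmonic map into a $\mathrm{CAT}(0)$ space. The key fact is that for any $Q\in X$, $d(u_\eps,Q)$ is a weak subsolution: $\eps\,\partial_t(e^{-t/\eps}\partial_t d^2)/e^{-t/\eps}+\Delta d^2\ge 2(\ldots)\ge0$. As $\eps\to0$ this operator becomes the backward-in-$t$-weighted one approximating $\Delta-\partial_t$. Combining the subsolution property with the energy bound gives a parabolic Caccioppoli/Poincaré inequality and a parabolic Morrey/Campanato decay $\iint_{P_r}|\nabla u|^2+|\partial_tu|^2\le Cr^{n+2\alpha}$, uniform in $\eps$ at scales $r\gg\sqrt\eps$. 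The decay comes from a hole-filling argument, where local compactness is used to pick centers $Q$ approximating averages. Local compactness then gives uniform convergence, and the Campanato characterization yields $u\in C^\alpha$.
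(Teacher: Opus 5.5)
The paper gives no proof of this statement; it is quoted from \cite{LSSW2025}. So your outline has to stand on its own, and at present it does not. Your uniqueness argument (EVI contraction by doubling variables) is fine. The trouble is that the step carrying the convergence claim, the approximate EVI for $u_\eps$, is asserted rather than derived.

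The mechanism you indicate does not localize in time as described. That mechanism is the global variational inequality for $\mathcal{I}_\eps$, tested with $u_{\eps'}$ or with time shifts. Information at a fixed time $t>0$ enters it only with the factor $e^{-t/\eps}$.

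The missing device is a time-localized first variation:
\begin{itemize}
\item move $u_\eps$ along geodesics toward a fixed $v\in H^1(M,X)$ with parameter $\eta=\tau e^{t/\eps}\phi(t)$, where $0\le\phi\in C_0^\infty(0,\infty)$; this is the same cancellation of the weight as with $\eta=e^{t/\eps}\phi$ in Section 2.1;
\item bound $|\partial_t u_\eta|^2$ by the CAT(0) comparison for the triangle with vertices $u(x,t)$, $u(x,t+h)$, $v(x)$;
\item bound $|\nabla u_\eta|^2$ by Korevaar--Schoen convexity;
\item let $\tau\to 0$.
\end{itemize}
This gives, in $\mathcal{D}'(0,\infty)$,
\[
\frac12\frac{d}{dt}d_2^2(u_\eps(t),v)+E(u_\eps(t))+\eps\int_M|\partial_t u_\eps|^2\,dv_g
\le E(v)+\frac{\eps}{2}\frac{d^2}{dt^2}d_2^2(u_\eps(t),v).
\]

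Even then, the error is a second time derivative, not an additive $o(1)$, so the doubling-of-variables comparison is not the ``standard'' one. Integrating against cut-offs of $[0,T]$ leaves boundary terms of size $\eps\, d_2\,\|\partial_t u_\eps(t)\|_{L^2(M)}$. These become small only after averaging in $t$ and using $\int_0^\infty\int_M|\partial_t u_\eps|^2\,dv_g\,dt\le E(u_0)$. The initial layer is controlled by $d_2(u_\eps(t),u_0)\le (tE(u_0))^{1/2}$.

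It is also simpler to compare $u_\eps$ with the EVI flow of $E$ on the Hadamard space $(L^2(M,X),d_2)$ (as in \cite{Mayer1998}, or Ambrosio--Gigli--Savar\'e) than with $u_{\eps'}$, since then only one curve carries an error. Without these estimates, neither the existence of the limit nor the EVI for it is established.

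The H\"older step also fails as written.
\begin{itemize}
\item Unweighted hole-filling yields only $\iint_{P_r}|\nabla u|^2\le Cr^{\beta}$ with some small $\beta>0$, whereas parabolic Morrey--Campanato requires $r^{n+2\alpha}$. You would need something like hole-filling weighted by the backward heat kernel, or a De Giorgi-type oscillation decay for the family $d^2(u,Q)$, $Q\in X$.
\item The subsolution inequality $\mathcal{L}_\eps d^2(u_\eps,Q)\ge 2(\eps|\partial_t u_\eps|^2+|\nabla u_\eps|^2)$ controls $|\partial_t u_\eps|^2$ only with the factor $\eps$. So your claimed $\eps$-uniform decay of $\iint_{P_r}|\partial_t u|^2$ has no source, and nothing in your sketch controls the oscillation of $u$ in $t$ once $n\ge 2$.
\end{itemize}

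Note that in this paper the regularity conclusion also follows from Theorem \ref{Lip1}, once $u$ is known to be a suitable weak solution. There, the subcaloricity \eqref{subcaloricity0} of $|\partial_t u|^2$ and the inequality \eqref{EVI00} for $|\nabla u|^2$, combined with Moser's argument, give local Lipschitz bounds in $(x,t)$. This holds for every CAT(0) target, on $M$ with positive injectivity radius and bounded curvature. That route could replace yours and needs no local compactness.
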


Recall that there exists a nonnegative Radon measure $\nu=\nu_t dt$ on
$M\times (0,\infty)$ such that
\[
e_\eps(u_\eps)\,dxdt
=(\eps|\partial_t u_\eps|^2+|\nabla u_\eps|^2)\,dxdt
\rightharpoonup \mu=\mu_t\,dt:=(|\nabla u|^2+\nu_t)\,dt
\]
as weak convergence of Radon measures. 

When $(M,g)=(\R^n, dx^2)$, we introduced in \cite{LSSW2025} a parabolic frequency function for the suitable weak solution $u$ given by Theorem \ref{convergence}, that is, for $z_0=(x_0, t_0)\in \R^n\times (0,\infty)$
and $0<R\le \sqrt{t_0}$, 
\[
N(u; z_0, R, u(z_0))=\frac{E(u; z_0, R)}{H(u;z_0, R, u(z_0)},
\]
where
\[
E(u;z_0, R)=2R^2\int_{t=t_0-R^2} G_{z_0}(x,t)\,d\mu_t(x),
\]
and
\[
H(u; z_0, R, u(z_0))=
\int_{t=t_0-R^2}G_{z_0}(x,t)d^2(u(x,t), u(z_0))\,dx,
\]
where $G_{z_0}$ denotes the backward heat kernel in $\R^n\times (0,t_0)$.
From both the inner and outer variations of $\mathcal{I}_\eps$ at $u_\eps$, we proved in \cite[Therorem 4.4]{LSSW2025} that $N(u;z_0, R, u(z_0))$ is a monotonically increasing function of $R$
in $(0, \sqrt{t_0})$. In 
particular, $N(u;z_0, u(z_0))$ exists and is upper semi-continuous
in $\R^n\times (0,\infty)$. As an application of this monotonicity 
property of
frequency function, we were able to show the spatial Lipschitz regularity of $u$ for a wide class of CAT(0)-spaces including Euclidean buildings
and homogeneous trees. Namely, 
\begin{theorem}\label{lip-reg} 
If $u_0: \R^n\to (X,d)$, a CAT(0)-metric space,  satisfies
$E(u_0)<\infty$ and $d(u_0, Q)\in L^\infty(\R^n)$ for some $Q\in X$, 
then there exists a unique suitable weak solution  $u:\R^n\times (0,\infty)\to X$ of the heat flow of harmonic map, 
with initial data $u_0$, which is Lipschitz continuous in $x$ and $\frac12$-H\"older continuous in $t$.
\end{theorem}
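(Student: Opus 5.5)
We propose to prove Theorem~\ref{lip-reg} by a Korevaar--Schoen type argument that avoids the frequency function. The idea is to compare $u$ with its own spatial translates. Existence and uniqueness of the suitable weak solution come from Theorem~\ref{convergence}, so only the regularity needs proof.

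\textbf{Step 1: contraction of the flow.} For $h\in\R^n$, set $u^h(x,t)=u(x+h,t)$. Since the equation on $\R^n$ is translation invariant, $u^h$ is the suitable weak solution with initial data $u_0^h$. We first show that the EVI \eqref{EVI0}, combined with the CAT(0) inequality \eqref{NPC1}, makes the flow $L^2$-contractive:
\[
d_2\big(u(t),u^h(t)\big)\le d_2(u_0,u_0^h).
\]
The standard proof of this is the Ambrosio--Gigli--Savar\'e argument. One uses $E$ convex along geodesics in $L^2(M,X)$, applies EVI to $u$ with test map $v=u^h(s)$ and to $u^h$ with test map $u(t)$, and then doubles variables. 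Since $E(u_0)<\infty$, the right side is at most $|h|\,\|\nabla u_0\|_{L^2}$.

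Step 1 alone only controls difference quotients in $L^2$, uniformly in time. To localize it, we instead use the pointwise function
\[
f_h(x,t)=d^2\big(u(x,t),u(x+h,t)\big).
\]
Following Korevaar--Schoen, we aim to show that $f_h$ is a weak subsolution of the heat equation, $(\partial_t-\Delta)f_h\le 0$ in $\mathcal D'$. We would prove this first at the level $\eps>0$ for the elliptic regularizations $u_\eps$ and $u_\eps^h$, both of which minimize $\mathcal I_\eps$. The tool is the Korevaar--Schoen comparison: replace the pair $(u_\eps,u_\eps^h)$ by the pair of maps obtained by moving a fraction $\eta\varphi$ along the geodesics joining $u_\eps(x,t)$ to $u_\eps^h(x,t)$, for a test function $\varphi\ge 0$. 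Using \eqref{NPC1} and the quadrilateral comparison, the sum of the two $\mathcal I_\eps$ energies decreases by a quantity whose leading term is $\eta\iint e^{-t/\eps}\eps^{-1}(\eps\,\partial_t f\,\partial_t\varphi+\nabla f\cdot\nabla\varphi)$. Minimality then gives
\[
-\eps\,\partial_t\big(e^{-t/\eps}\partial_t f\big)e^{t/\eps}-\Delta f\le 0, \qquad\text{that is,}\qquad -\eps\partial_t^2 f+\partial_t f-\Delta f\le 0 .
\]
Letting $\eps\to0$, and using $u_\eps\to u$ in $L^2$ together with the bounds \eqref{energybounded}, yields $(\partial_t-\Delta)f_h\le 0$.

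\textbf{Step 2: mean value inequality.} For a subsolution $f_h\ge0$ of the heat equation we have $f_h(x,t)\le C r^{-n-2}\iint_{P_r(x,t)}f_h$ on parabolic cylinders. If we assume Lipschitz continuity of $u_0$, the maximum principle gives $f_h\le \sup f_h(\cdot,0)\le L^2|h|^2$ directly. For general $u_0\in H^1$, the boundedness of $d(u_0,Q)$ and the maximum principle for the subsolution $d^2(u,Q)$ keep $u$ bounded. The mean value bound with $r\sim\sqrt t$, combined with the $L^2$ estimate $\iint f_h\le |h|^2\iint|\nabla u|^2$, then gives
\[
f_h(x,t)\le C\,t^{-n/2-1}\,|h|^2\,E(u_0)\cdot t,
\]
which is only the global bound. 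Localizing, via the energy monotonicity and the usual $\epsilon$-free Korevaar--Schoen subharmonicity of $|\nabla u|^2$ (here $(\partial_t-\Delta)|\nabla u|^2\le0$, obtained as $h\to0$), gives $|\nabla u|^2(x,t)\le C t^{-n/2-1}\iint_{P_{\sqrt t}}|\nabla u|^2<\infty$. This is spatial Lipschitz continuity for $t\ge\delta>0$.

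\textbf{Step 3: time regularity.} The $\frac12$-H\"older continuity in $t$ follows from the spatial Lipschitz bound. Since $d^2(u(x,t),P)$ is a subsolution for each fixed $P$, we take $P=u(x,s)$ and compare at scale $\sqrt{|t-s|}$. This gives $d(u(x,t),u(x,s))\le C(\sqrt{|t-s|}+\|\nabla u\|_\infty\sqrt{|t-s|})$.

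\textbf{Main obstacle.} The hardest step is the rigorous derivation of the subsolution inequality for $f_h$ at level $\eps$ and its passage to the limit. The geodesic interpolation must be admissible in $\mathfrak V_{u_0}$, which requires $\varphi$ to vanish at $t=0$. The time weight $e^{-t/\eps}$ produces the extra term $-\eps\partial_t^2 f$, which must be shown to vanish weakly, using $\eps\iint|\partial_t u_\eps|^2\to0$. The first thing I would try is to take test functions compactly supported in $t>0$ and to exploit the directional energy convexity of Korevaar--Schoen in the product space $M\times\R_+$.
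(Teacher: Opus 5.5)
Your proof is correct. Its key step is the same as the paper's alternate argument in Section 2.1: the Korevaar--Schoen geodesic interpolation between the WED minimizers $u_\eps$ and $u_\eps^h$, which gives $(\eps\partial_t^2-\partial_t+\Delta)\,d^2(u_\eps,u_\eps^h)\ge 0$ weakly. Neither route uses the proof in \cite{LSSW2025}, which goes through monotonicity of the parabolic frequency and $N\ge 1$.

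You depart from the paper in the order of limits and in the endgame. The paper lets $h\to 0$ at fixed $\eps$ to get $(\eps\partial_t^2-\partial_t+\Delta)|\nabla u_\eps|^2\ge 0$. It then invokes \cite[Lemma 6.3]{LSSW2025} to obtain the $\eps$-uniform estimate of Theorem~\ref{Theorem 1.4}, and only afterwards lets $\eps\to 0$. You let $\eps\to 0$ at fixed $h$. This is clean because $d^2(u_\eps,u_\eps^h)\to d^2(u,u^h)$ strongly in $L^1_{\rm loc}$, whereas $|\nabla u_\eps|^2$ converges only weakly, possibly with the defect measure $\nu$. After that you need only the classical mean value inequality for the heat equation.

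Your Step 2 is in fact finished at the line you call ``only the global bound.'' Combined with $E(u(\tau))\le E(u_0)$, the estimate $d^2(u(x,t),u(x+h,t))\le Ct^{-n/2}E(u_0)|h|^2$ holds for every $h\in\R^n$. That is already a uniform spatial Lipschitz bound on $\R^n\times[\delta,\infty)$. So Step 1 and the localization through $(\partial_t-\Delta)|\nabla u|^2\le 0$ are unnecessary.

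Your route is shorter for this theorem. The paper's route buys the uniform estimate for the approximations $u_\eps$ themselves. Its EVI-based version (Sections 2.2--3) also extends to curved domains, where translates are no longer solutions for the same metric.

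For time regularity, the paper uses the EVI with $v=u(\cdot,\cdot+\delta)$ to show that $|\partial_t u|^2$ is subcaloric. Hence $|\partial_t u|$ is bounded for $t\ge\delta$, which gives Lipschitz dependence on $t$, stronger than $\frac12$-H\"older. Your barrier argument gives exactly the stated exponent, with one correction. What works is comparing $d^2(u(\cdot,\tau),u(x,s))$ with $\Lambda^2(|y-x|^2+2n(\tau-s))$ globally, where $\Lambda$ is the spatial Lipschitz constant at time $s$. A barrier on $B_R(x)$ at a fixed scale $R$ also works, using the bound on $d^2$ along the lateral boundary. A comparison at scale $\sqrt{|t-s|}$ does not work. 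This global or fixed-scale comparison is where the hypothesis $d(u_0,Q)\in L^\infty$ is actually needed, not in Step 2.
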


From \cite{LSSW2025}, the local spatial Lipschitz continuity is a consequence of the fact that
the value of frequency function $N(u; z_0, u(z_0))\ge 1$ for all
$z_0\in \R^n\times (0,\infty)$.

Very recently, Zhang and Zhu \cite{ZhangZhu2026} proved the Lipschitz regularity of suitable weak solutions of heat flow of harmonic maps through a different method. In particular, 
they showed the local Lipschitz regularity for suitable weak solutions holds in $(x,t)$ when $(M,g)$ is a complete Riemannian manifold with 
Ricci curvature bounded from below and $(X,d)$ is a CAT(0)-space.
Roughly speaking, among other ingredients their arguments are 
mainly based on 1) the observation that $|\partial_tu|^2$ is a sub solution of the heat equation on
$M\times (0,\infty)$ so that for $t>0$, $\phi^t(x,y)=d(u(x,t), u(y,t))$
can be made into a sub-solution of the coupling elliptic equation 
$\Delta_{x,y}\phi^t(x,y)\ge -C$ on $M\times M$, and 2) the approximation of $\phi^t$ by the sup-convolution from the viscosity theory of nonlinear elliptic equations, that converts $\phi^t$ into a semi-convex sub-solution $\phi^t_\eps$ which has almost everywhere twice differentiability by Aleksandrov's theorem, this led them to establish a crucial parabolic perturbation Lemma. The idea of sup-convolution  
might date back to R. Jensen's original proof of uniqueness of viscosity solutions of second order fully nonlinear elliptic equations, see Jensen \cite{Jensen1988} or Caffarelli-Cabr\'e \cite{CCV43}. 

In this paper, we make a new observation of suitable weak solutions $u$ to the heat flow of harmonic maps. It is motivated by Korevaar-Schoen's 
derivation of weak Bochner type inequality for minimizing harmonic maps
into CAT(0)-spaces, which is based on Reshetnyak's quadrilateral comparison property for $d^2$, see \cite{Korevaar-Schoen1993}. More precisely, we show in Section 2 below that if $u$ satisfies the EVI \eqref{EVI0}, then
\begin{align}\label{EVI00}
&\iint_{M\times (0,\infty)} |\nabla u|^2(\partial_t \eta+2\Delta \eta+C\eta
+C|\nabla \eta|)\,dv_gdt\ge -C\iint_{M\times (0,\infty)}\eta |\partial_t u|^2\,dv_gdt
\end{align}    
holds for any $0\le \eta\in C_0^\infty(M\times (0,\infty))$,
with $C>0$ depending on the $C^{1,1}$-norm of $g$.

On the other hand, as already observed by \cite{ZhangZhu2026}, we have
that $|\partial_t u|^2$ satisfies 
\begin{align}\label{subcaloricity0}
(\partial_t-2\Delta) |\partial_t u|^2\ge 0 \ \ {\rm{in}}\ \ \mathcal{D}'(M\times (0,\infty)).   
\end{align}
See Section 3 below for a derivation slightly different from
\cite[Theorem 3.4]{ZhangZhu2026}. It is clear that \eqref{subcaloricity0} leads to the local $L^\infty$-bound of $|\partial_t u|^2$, which enables us to apply Moser's proof of Harnack inequality
(cf.  \cite{Moser1964})  to \eqref{EVI00}
to obtain the local $L^\infty$-bound of $|\nabla u|^2$.
This enables us to prove 
\begin{theorem}\label{Lip1}
Assume $(X,d)$ is a CAT(0)-metric space and $(M,g)$ is a complete Riemannian manifold with positive inj$(M)$ and bounded curvature.
For $u_0\in H^1(M, X)$, let $u: M\times [0,\infty)\to X$, with $u\in L^\infty_t H^1(M,X)$ and $\partial_t u\in L^2([0, T], L^2(M))$ for all $T<\infty$, be 
any suitable weak solution of the heat flow of harmonic maps from $(M,g)$ to $(X,d)$ with initial value $u_0$. Then $u$ is locally Lipschitz on $M\times (0,\infty)$. Furthermore,  for any
$\delta>0$, 
\begin{align}\label{global_lip}
|\partial_t u|^2+|\nabla u|^2\le C, \ \ {\rm{in}}\ \ M\times [\delta,\infty),
\end{align} 
where the constant $C>0$ depends on $\delta, \ E(u_0)$, inj$(M)$, and bound of the curvature of $g$.
\end{theorem}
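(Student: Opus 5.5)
The plan is to take the two differential inequalities announced in the introduction as the core of the argument: the weak Bochner-type inequality \eqref{EVI00} for $|\nabla u|^2$, and the subcaloricity \eqref{subcaloricity0} of $|\partial_t u|^2$. They are then combined by a two-step Moser iteration on parabolic cylinders of a fixed size. Those cylinders sit inside geodesic balls of radius $r_0\le \frac12{\rm inj}(M)$. Because ${\rm inj}(M)>0$ and the curvature is bounded, harmonic (or normal) coordinates on such balls have $C^{1,1}$ control of $g$ with uniform constants. So the Sobolev inequality, the cutoff functions, and the constants in \eqref{EVI00} are uniform in the center point. This uniformity is what will turn local bounds into the global bound \eqref{global_lip} on $M\times[\delta,\infty)$.

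\textbf{Step 1: bound on $|\partial_t u|^2$.} First I will justify \eqref{subcaloricity0}. I would get it from the EVI \eqref{EVI0} by comparing $u(t)$ with $u(t+h)$: the contraction property $d_2(u(t+h),u(s+h))\le d_2(u(t),u(s))$ holds for EVI flows in CAT(0) targets. Localizing it with Reshetnyak's quadrilateral inequality should show that $d^2(u(x,t+h),u(x,t))$ is weakly subcaloric for $\partial_t-2\Delta$. Dividing by $h^2$ and letting $h\to0$ then gives \eqref{subcaloricity0}.

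Next, \eqref{subcaloricity0} says that $f=|\partial_t u|^2\ge0$ is a weak subsolution of a uniformly parabolic equation. Moser's local maximum principle on $Q_r(z_0)=B_r(x_0)\times(t_0-r^2,t_0]$ gives
\[
\sup_{Q_{r/2}(z_0)}|\partial_t u|^2\le C r^{-n-2}\iint_{Q_r(z_0)}|\partial_t u|^2 .
\]
The right-hand side is at most $Cr^{-n-2}E(u_0)$, since the EVI energy identity gives $\int_0^\infty\int_M|\partial_t u|^2\le E(u_0)$, as in \eqref{energybounded}. Taking $r=\min\{r_0,\sqrt{\delta}\}$ bounds $|\partial_t u|^2$ on $M\times[\delta,\infty)$.

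\textbf{Step 2: bound on $|\nabla u|^2$.} With $|\partial_t u|^2\le C_1$ in hand, \eqref{EVI00} says that $w=|\nabla u|^2$ satisfies, weakly,
\[
(\partial_t-2\Delta)w\le C w+C|\nabla w|+C_1 .
\]
The gradient term appears in \eqref{EVI00} only through the form $C|\nabla\eta|\,w$. Testing with $\eta=\phi^2 w_k^{p-1}$, where $w_k$ is a truncation of $w$, produces terms like $\int w\,|\nabla(\phi^2w_k^{p-1})|$. These can be absorbed by Young's inequality into the good term $\int\phi^2 w^{p-2}|\nabla w|^2$. Running the Moser scheme then yields
\[
\sup_{Q_{r/2}}w\le C\Big(r^{-n-2}\iint_{Q_r}w+1\Big),
\]
and the right-hand side is controlled by $\sup_t E(u(t))\le E(u_0)$. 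Uniformity of the constants in $z_0$ then gives \eqref{global_lip}. Finally, a bounded spatial energy density gives spatial Lipschitz continuity, by Korevaar--Schoen. A bounded $|\partial_t u|$ gives Lipschitz continuity in $t$ via the metric derivative in ${\rm AC}^2$. Together these give local Lipschitz continuity on $M\times(0,\infty)$.

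\textbf{Main obstacle.} I expect the main difficulty to be the derivation of \eqref{EVI00} itself, and more precisely its justification as a distributional inequality. For the derivation, I would insert into the EVI the competitor $v=$ the geodesic interpolation between $u(x)$ and $u(x+\tau\eta(x)e)$. Reshetnyak's quadrilateral comparison bounds $E(v)$ in terms of $E(u)$ and of difference quotients, as in Korevaar--Schoen's weak Bochner inequality. The time-derivative term of the EVI is then handled through the product with $|\partial_t u|$. Dividing by $\tau^2$ and summing over directions should produce the $2\Delta\eta$ and $\partial_t\eta$ terms. The curvature of $g$ enters as the $C\eta$ and $C|\nabla\eta|$ errors.

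The delicate part of the justification is that $w$ is only an $L^1$ density, so powers of $w$ are not admissible test functions. The Moser iteration should therefore first be run on the difference-quotient approximations $|u(x+he)-u(x)|^2/h^2$ from Korevaar--Schoen. These satisfy the inequality uniformly in $h$, so the bound passes to the limit $h\to0$.
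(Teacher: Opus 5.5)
Your top-level reduction matches the paper's. A local maximum principle for the subcaloric time quotients gives $\sup_{P_R(z_0)}|\partial_t u|^2\le CR^{-(n+2)}E(u_0)$. Moser iteration on \eqref{EVI00}, with this bounded source, then bounds $|\nabla u|^2$, and taking $R<\frac12\min\{\sqrt{t_0},\mathrm{inj}(M)\}$ makes the constants uniform.

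The gap is in the step you yourself single out, the derivation of \eqref{EVI00}. As you describe it, with one competitor inserted into the EVI for $u$ alone, it cannot work. The Korevaar--Schoen comparison \eqref{comparison}, which produces the good term $-\nabla\eta\otimes\nabla d^2(u,v)$, is an inequality for the \emph{sum} $\pi_{u_\eta}+\pi_{u_{1-\eta}}$. Already for a Hilbert-space target, the one-sided expansion of $|\nabla u_\eta|^2$ contains two extra terms: $\eta(|\nabla v|^2-|\nabla u|^2)$, which is of order $s$ and blows up after division by $s^2$, and the cross term $2\langle\nabla u,v-u\rangle\cdot\nabla\eta$, which has no metric counterpart. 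Both cancel only against the corresponding terms coming from $u_{1-\eta}$.

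The missing idea is to use a second solution. The translate $u_{s\omega}(x,t)=u(\overline{x}(x,s),t)$ is a suitable weak solution for the pulled-back metric $g_{s\omega}$. So you apply \eqref{EVI1} to $u$ with $w=u_\eta$, apply \eqref{EVI2} to $u_{s\omega}$ with $w=u_{1-\eta}$, and add. Here $\eta$ is the interpolation weight along the geodesic from $u$ to $u_{s\omega}$, not a cutoff inside the translation. The energy side is then handled by Korevaar--Schoen, and the $C\eta$ and $C|\nabla\eta|$ errors come from $g_{s\omega}-g$. The distance side is converted by \eqref{NPC111} and \eqref{NPC03} into $\int\eta\,[d^2(u,u_{s\omega})(t)-d^2(u,u_{s\omega})(t-\delta)]\,dv_g$, up to $d_2^2(u(t),u(t-\delta))+d_2^2(u_{s\omega}(t),u_{s\omega}(t-\delta))=O(\delta^2)$. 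That conversion is what produces the $\partial_t\eta$ term. The product $|\partial_t u|\,|\nabla u|$ enters only through the change of volume form $dv_{g_{s\omega}}-dv_g$. The limits are taken in order: $\delta\to0$, then $\eta\mapsto\eps\eta$ with $\eps\to0$, then division by $s^2$ and $s\to0$. This is the same structure that localizes the contraction in your Step 1: EVI for $u$ and for $u(\cdot,\cdot+h)$, with the complementary competitors $u_\phi$ and $u_{1-\phi}$.

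Your proposed fix for the regularity issue in the Moser step also fails as stated on a curved domain. The quotients $d^2(u,u_{s\omega})/s^2$ do not satisfy \eqref{EVI00} uniformly in $s$. Before the limit $s\to0$, their inequality carries errors such as $C\int(\eta+|\nabla\eta|)|\nabla u|^2\,dv_g$, coming from $g_{s\omega}-g$, together with a term $Cs^{-1}\int|\nabla\eta|\,d(u,u_{s\omega})(\cdots)$ involving the gradients of $u$ and $u_{s\omega}$. These are expressed through $|\nabla u|^2$, not through the quotient. Testing with powers of the quotient therefore produces $\int\phi^2\big(d^2(u,u_{s\omega})/s^2\big)^{p-1}|\nabla u|^2$, and controlling that needs exactly the higher integrability you are trying to prove. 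Only on a flat domain do the spatial quotients satisfy a clean inequality, as in \eqref{subharmonic2}. The paper does not resolve this point either; it simply appeals to Moser for \eqref{EVI00}.

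In Step 1, by contrast, your approximation idea is right and is what the paper does. The function $f_\delta=d^2(u(x,t),u(x,t+\delta))$ satisfies \eqref{subcaloricity4} with no error terms. So apply the local maximum principle to $f_\delta$, and only then divide by $\delta^2$ and let $\delta\to0$, rather than running Moser on the $L^1$ density $|\partial_t u|^2$.
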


As a byproduct of derivation of \eqref{EVI00}, we obtain the following 
apriori gradient estimate for any minimizer of WED functional $\mathcal{I}_\eps$ when the domain $(M,g)=(\R^n, dx^2)$ is an Euclidean space, which gives a positive answer to the question we asked how to extend \cite[Theorem 1.3]{LSSW2025} to CAT(0)-metric spaces. 
More precisely, we prove

\begin{theorem}[Uniform Lipschitz Estimate] \label{Theorem 1.4}
Let $(M, g)=(\R^n, dx^2)$  and $(X,d)$ be a CAT(0)-metric space. Let
$u_0\in H^1(\R^n, X)$ and $u_\eps\in \mathfrak{V}_{u_0}$  be a minimizer
of $\mathcal{I}_\eps$ over $\mathfrak{V}_{u_0}$. 
Then there exists $c=c(n)>0$ such that for any $z_0=(x_0, t_0)\in \R^n\times (0,\infty)$ and $0<r<\frac{\sqrt{t_0}}2\big\}$,
the following 
\begin{align}\label{x-lip}
\sup_{B_r(x_0)\times (t_0-r^2, t_0+r^2)}\big|\nabla u_\eps\big |^2  
\le c\big[\frac{\eps}{r^{n+2}}+\frac{1}{r^n}\big] E(u_0)
\end{align}
hold for all $0<\eps\le r^2$.
\end{theorem}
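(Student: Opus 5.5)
The plan is to derive, for the WED minimizer $u_\eps$, an analogue of the weak Bochner inequality \eqref{EVI00} for the full energy density $e_\eps(u_\eps)=\eps|\partial_t u_\eps|^2+|\nabla u_\eps|^2$. We then run a Moser iteration adapted to the degenerate-parabolic operator that appears. The Euler--Lagrange equation of $\mathcal{I}_\eps$ is formally $-\eps\,\partial_t(e^{-t/\eps}\partial_t u)+\dots$, i.e.
\[
-\eps\,\partial_t^2u+\partial_tu-\Delta u=0,
\]
an elliptic equation in $(x,t)$ with a drift. Because $\R^n$ is flat and translations in $x$ commute with the functional, we can compare $u_\eps$ with its spatial translates $u_\eps(\cdot+he_\alpha,\cdot)$ in the Korevaar--Schoen manner. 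For this we use the competitor obtained by moving along the geodesic from $u_\eps(x,t)$ toward $u_\eps(x+h,t)$ by the fraction $\eta(x,t)$, where $0\le\eta\in C_0^\infty$. Reshetnyak's quadrilateral comparison for $d^2$ then gives, as in the derivation of \eqref{EVI00}, a weak subsolution inequality for $f_h(x,t)=d^2(u_\eps(x,t),u_\eps(x+he_\alpha,t))/h^2$:
\[
\iint e^{-t/\eps}\big(\eps\,\partial_t f_h\,\partial_t\eta+\nabla f_h\cdot\nabla\eta\big)\,dxdt\le 0 .
\]
This holds in weak form for all test functions (after absorbing the weight by writing $\eta=e^{t/\eps}\zeta$). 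Since the domain is flat, there are no curvature error terms, and this is why the estimate is uniform in $\eps$. Letting $h\to0$ and summing over $\alpha$, $|\nabla u_\eps|^2$ becomes a weak subsolution of
\[
\eps\,\partial_t^2 w-\partial_t w+\Delta w\ge 0 .
\]

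Next we exploit this equation at scale $r$. We rescale $x=x_0+ry$, $t=t_0+r^2s$, so that the operator becomes $(\eps/r^2)\partial_s^2-\partial_s+\Delta_y$ with $\eps/r^2\le1$. This family of operators is uniformly ``parabolic-or-better'': the term $(\eps/r^2)\partial_s^2$ has the good sign and only adds ellipticity in $s$. A Moser iteration (Caccioppoli estimates with cutoffs in $y$ and $s$, plus the Sobolev inequality in $y$) therefore gives a mean-value inequality
\[
\sup_{Q_{1}}w\le c(n)\iint_{Q_2}w ,
\]
with $c(n)$ independent of $\eps/r^2\in(0,1]$. The Caccioppoli step uses $\eta^2w^{p-1}$ as a test function. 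The extra term $\eps\iint\partial_s w\,\partial_s(\eta^2w^{p-1})$ yields a nonnegative quantity $\eps|\partial_s w^{p/2}|^2\eta^2$ plus a cross term. That cross term is bounded by $(\eps/r^2)\iint w^p|\partial_s\eta|^2$, which is harmless.

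Scaling back, we obtain
\[
\sup_{B_r\times(t_0-r^2,t_0+r^2)}|\nabla u_\eps|^2\le \frac{c}{r^{n+2}}\iint_{B_{2r}\times(t_0-4r^2,t_0+4r^2)}|\nabla u_\eps|^2\,dxdt .
\]
The last step controls the right-hand side by the energy bounds. Here \eqref{energybounded} in the form stated is not localized well enough, so we instead use the weighted bound $\iint e^{-t/\eps}\eps^{-1}|\nabla u_\eps|^2\le 2E(u_0)$, together with the monotonicity of the time slices from the outer time variation. Concretely, the inner variation in $t$ gives
\[
\frac{d}{dt}\int\Big(\frac{|\nabla u_\eps|^2}{2}-\frac{\eps}{2}|\partial_tu_\eps|^2\Big)\le0 ,
\]
so the slice energies are bounded by $E(u_0)$ plus $\eps$ times the kinetic term. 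We expect this to give
\[
\iint_{B_{2r}\times(t_0-4r^2,t_0+4r^2)}|\nabla u_\eps|^2\le c\,(r^2+\eps)E(u_0),
\]
which yields exactly \eqref{x-lip}.

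The main obstacle is the first step: justifying the Bochner-type subsolution inequality for the weighted functional. The weight $e^{-t/\eps}$ and the time-derivative term must be handled in the competitor construction. The geodesic interpolation $\eta$ depends on $t$, so the directional energy in $t$ of the interpolated map must be estimated via the quadrilateral inequality applied simultaneously to $t$-differences. We will try first to treat $(x,t)$ as a single $(n+1)$-dimensional domain with the metric $\eps\,dt^2+dx^2$ and weight $e^{-t/\eps}$, and to apply Korevaar--Schoen's argument verbatim with only $x$-translations as isometries.
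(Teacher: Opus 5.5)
Your proposal is correct and follows the paper's route: the paper derives $(\eps\partial_t^2-\partial_t+\Delta)|\nabla u_\eps|^2\ge 0$ exactly as you outline, and then simply cites \cite[Lemma 6.3]{LSSW2025} for the $\eps$-uniform mean-value estimate and the localized energy bound, which you reconstruct by Moser iteration and the time inner-variation identity. The shared steps are geodesic interpolation toward the spatial translate, minimality of both $u_\eps$ and its translate, the Korevaar--Schoen comparison for the $t$- and $x$-directional energies, $\eta\mapsto\tau\eta$ with $\tau\to 0$, $\eta=e^{t/\eps}\phi$, and $h\to 0$ in the difference quotients.

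One caution if you write out the Moser step. The $\sup_s\int\eta^2w^p\,dy$ term needed for the parabolic Sobolev step is not covered by your Caccioppoli remark, because truncating in time makes $\kappa\partial_s^2$ ($\kappa=\eps/r^2$) produce the unsigned slice term $\kappa\int\eta^2\partial_s(w^p)\,dy$. It can be handled by integrating the resulting inequality $\Phi\le\kappa\Phi'+C$, where $\Phi(\tau)=\int\eta^2w^p(\cdot,\tau)\,dy$, backward from the top of $\mathrm{supp}\,\eta$. This is why the cylinder must extend above $t_0+r^2$; the solution $e^{s/\kappa}$ shows that backward cylinders would fail.
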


\section{Parabolic inequality of $|\nabla u|^2$}
Assume $(X,d)$ is CAT(0). To better demonstrate the ideas, 
we first consider the domain manifold $(M,g)$ is $(\R^n, g_0=dx^2)$,
and then present extra ingredients that are needed for the domain being
any general complete Riemann manifold without boundary.
\subsection{$(M,g)$: Euclidean space with standard metric}
Write $u=u_\eps$. For a small constant vector $e=(e,0)\in\R^{n+1}\setminus\{0\}$, define
\[u^e(x,t)=u(x+e,t), \ \ (x,t)\in \R^{n+1}_+.
\]
Denote $u_0=u$ and $u_1=u^e$.
Let $\eta\in C^\infty_0(\R^{n}\times (0,+\infty))$ be such that $0\le \eta
<\frac12$. For $(x,t)\in \R^{n+1}_+$, 
let $u_\eta(x,t)=u_{\eta(x,t)}(x,t)\in X$ be the unique point on the unique geodesic that joins $u_0(x,t)$ to $u_1(x,t)$ such that 
$$d(u_{\eta(x,t)}(x,t), u_0(x,t))=\eta(x,t)d(u_0(x,t), u_1(x,t)).$$
Then, since $\eta\big|_{t=0}=0$, $u_{\eta}\in \mathfrak{V}_{u_0}$.  Since $u$ minimizes $\mathcal{I}_\epsilon$ in $\mathfrak{V}_{u_0}$, we must have
$$
\mathcal{I}_\epsilon(u) \le \mathcal{I}_\epsilon(u_\eta).
$$

It is not hard to see from this definition
that for $(x,t)\in\R^{n+1}_+$, $u_{1-\eta(x,t)}(x,t)=u^e_{\eta(x,t)}(x,t)$. Since $u^e$ minimizes $\mathcal{I}_\epsilon$ over
$\mathfrak{V}_{u_0^e}$ and $u^e_\eta\in \mathfrak{V}_{u_0^e}$, 
it follows that 
$$
\mathcal{I}_\epsilon(u^e) \le \mathcal{I}_\epsilon(u^e_\eta)
=\mathcal{I}_\epsilon(u_{1-\eta}).
$$
Adding these two inequalities together implies
\begin{equation}\label{minimality}
\mathcal{I}_\epsilon(u)+\mathcal{I}_\epsilon(u^e)
\le \mathcal{I}_\epsilon(u_\eta)
+\mathcal{I}_\epsilon(u_{1-\eta}).
\end{equation}

Recall from \cite[Lemma 2.4.2]{Korevaar-Schoen1993} that
\begin{equation}\label{comparison}
\pi_{u_\eta}+\pi_{u_{1-\eta}}\le\pi_{u}+
\pi_{u^e}-\nabla\eta\otimes \nabla d^2(u, u^e)+Q(\eta,\nabla\eta),
\end{equation}
where $Q(\eta,\nabla\eta)$ consists of integrable terms that are quadratic in
$\eta$ and $\nabla\eta$, and the bilinear, nonnegative tensor $\pi_u:\Gamma(T(M\times(0,\infty)))\times \Gamma(T(M\times (0,\infty)))\to L^1(M\times (0,\infty))$ is defined by
\[
\pi_u(Z, W)=\frac14\big|u_*(Z+W)\big|^2-\frac14\big|u_*(Z-W)\big|^2.
\]
In particular, it follows from \eqref{comparison} that
\begin{align}\label{comparison-t}
|\partial_tu_\eta|^2+|\partial_tu_{1-\eta}|^2
\le |\partial_t u|^2+|\partial_t u^e|^2
-\partial_t \eta\partial_t(d^2(u, u^e))+Q(\eta,\partial_t\eta),
\end{align}
and
\begin{align}\label{comparison-x}
|\frac{\partial u_\eta}{\partial x_i}|^2+|\frac{\partial u_{1-\eta}}{\partial x_i}|^2
\le |\frac{\partial u}{\partial x_i}|^2+|\frac{\partial u^e}{\partial x_i}|^2
-\frac{\partial \eta}{\partial x_i}
\frac{\partial (d^2(u, u^e))}{\partial x_i}
+Q(\eta,\frac{\partial\eta}{\partial x_i}),\ i=1,\cdots, n.
\end{align}

Applying \eqref{comparison-t} and \eqref{comparison-x}
into the expansion of \eqref{minimality}, we obtain
\begin{align*}
&\iint_{\R^{n+1}_+} \frac{e^{-t/\eps}}{\eps}\left(\eps\abs{\partial_t u}^2+\abs{\nabla u}^2\right)\,dxdt 
+\iint_{\R^{n+1}_+} \frac{e^{-t/\eps}}{\eps}\left(\eps\abs{\partial_t u^e}^2+\abs{\nabla u^e}^2\right)\,dxdt\\
&\le 
\iint_{\R^{n+1}_+} \frac{e^{-t/\eps}}{\eps}\left(\eps\abs{\partial_t u_\eta}^2+\abs{\nabla u_\eta}^2\right)\,dxdt 
+\iint_{\R^{n+1}_+} \frac{e^{-t/\eps}}{\eps}\left(\eps\abs{\partial_t u_{1-\eta}}^2+\abs{\nabla u_{1-\eta}}^2\right)\,dxdt\\
&\le \iint_{\R^{n+1}_+} \frac{e^{-t/\eps}}{\eps}\left(\eps |\partial_t u|^2
+\eps|\partial_t u^e|^2-\eps \partial_t d^2(u, u^e)\cdot\partial_t \eta+\eps Q(\eta,\partial_t\eta)\right)\,dxdt\\
&+\iint_{\R^{n+1}_+} \frac{e^{-t/\eps}}{\eps}\left(|\nabla u|^2
+|\nabla u^e|^2-\nabla d^2(u, u^e)\cdot\nabla\eta+Q(\eta,\nabla\eta)\right)
\,dxdt.
\end{align*}
This yields
\begin{align*}
0\le  \iint_{\R^{n+1}_+}\frac{e^{-t/\eps}}{\eps}\Big[-\eps \partial_t d^2(u, u^e)\cdot\partial_t \eta+\eps Q(\eta,\partial_t\eta)-\nabla d^2(u, u^e)\cdot\nabla\eta+Q(\eta,\nabla\eta)\Big]
\,dxdt   
\end{align*}
Replacing $\eta$ by $\tau\eta$ in this inequality
and then sending $\tau\to 0$, we can eliminate the terms involving
the quadratic $Q$'s and obtain that 
\begin{align*}
0\le  \iint {e^{-t/\eps}}\Big[-\eps \partial_t d^2(u, u^e)\cdot\partial_t \eta-\nabla d^2(u, u^e)\cdot\nabla\eta\Big]
\,dxdt   
\end{align*}
Let $\eta=e^{\frac{t}{\eps}}\phi$. Then we arrive at
\begin{align}\label{subharmonic1}
0\le  \iint \Big[-\eps \partial_t d^2(u, u^e)\cdot\partial_t \phi
-\partial_t d^2(u, u^e)\phi-\nabla d^2(u, u^e)\cdot\nabla\phi\Big]
\,dxdt.  
\end{align}
This is equivalent to saying that
$0\le w=d^2(u, u^e)\in W^{1,2}_{\rm{loc}}(\R^{n+1}_+)$ is a weak solution of 
\begin{align}    
\mathcal{L}_\eps w:=\Big(\eps\partial^2_t-\partial_t +\Delta\big)w\ge 0.
\end{align}

For any small $h>0$ and the standard base vectors $e_i=\frac{\partial}{\partial x_i}$
for $i=1,\cdots, n$, let $e=he_i$
and consider the finite difference quotient 
$$(w_\eps)_i^h(x,t)=\frac{d^2(u_\eps(x+he_i,t), u_\eps(x,t))}{h^2},\ 
\ \forall (x,t)\in \R^n\times (0,\infty),$$
we have
\begin{align} \label{subharmonic2}   
\mathcal{L}_\eps \Big(\sum_{i=1}^n(w_\eps)_i^h\Big)
=\Big(\eps\partial^2_t-\partial_t +\Delta\Big)\Big(\sum_{i=1}^n(w_\eps)_i^h\Big)
\ge 0.
\end{align}
Applying the definition of energy density by
Korevaar-Schoen \cite{Korevaar-Schoen1993}, we can send $h\to 0$ in  
\eqref{subharmonic2} to obtain
\begin{align} \label{subharmonic3}   
\Big(\eps\partial^2_t-\partial_t +\Delta\Big)|\nabla u_\eps|^2\ge 0.
\end{align}

\bigskip
\noindent{\bf Proof of Theorem \ref{Theorem 1.4}}: With
\eqref{subharmonic3} at hand,  the proof follows exactly
from \cite[Lemma 6.3]{LSSW2025}. 
\qed

\begin{remark} As already proved by \cite{LSSW2025}
that there exists a unique suitable weak solution $u$ to the heat flow of harmonic maps such that
$$u_\eps\rightarrow u \ \ {\rm{in }}\ \ L^2_{\rm{loc}}\cap C^0_{\rm{loc}}(\R^{n+1}_+).$$
Sending $\eps\to 0$ \eqref{subharmonic3} 
and applying the lower semi-continuity property, 
we conclude that 
$|\nabla u|^2$ 
is a sub-caloric function, that is, 
\begin{align} \label{subharmonic6}   
\Big(-\partial_t+\Delta\Big)|\nabla u|^2\ge 0
\end{align}
holds in the weak sense.
\end{remark}

\subsection{$(M,g)$: complete Riemann manifold without boundary}

The main difficulty for the domain being a Riemannian manifold
arises from the fact that the translation of
a minimizer $u_\eps$ of $\mathcal{I}_\varepsilon$, $u_\eps^*(x,t)=u_\eps(\overline{x}(x,s),t)$, is a minimizer of $\mathcal{I}_\varepsilon$ associated with the pull-back metric $g^*
=\overline{x}^*(g)$ rather than $g$. In particular,
the errors of terms involving $|\partial_t u_\eps|^2$ are very difficult to
control. However,  as observed by Korevaar-Schoen \cite{Korevaar-Schoen1993} the errors of terms involving $|\nabla u_\eps|^2$ can be controlled as long as the metric $g\in C^{1,1}(M)$. This latter observation leads us to consider the Evolution Variational Inequality (or EVI) that is the very definition of $u$ being a suitable weak solution given by
\cite{LSSW2025}. 

First, recall that since any suitable solution $u$ to the heat flow of harmonic maps into $(X,d)$  satisfies the EVI \eqref{EVI0}, by integration with $t$ we have
\begin{align}\label{EVI1}
&\int_M d^2(u(x, t), w(x))\,dv_g-\int_M d^2(u(x, t-\delta), w(x))\,dv_g\nonumber\\
&\le 2\int_{t-\delta}^{t} (E(w;g)-E(u(\tau); g))\,d\tau\nonumber\\
&\le 2\delta (E(w;g)-E(u(t);g)), \ \forall w\in H^1(M, X), \ 0<\delta<t.
\end{align}
Here $E(u; g)$ is defined by 
$$E(u; g)=\int_M \frac12 g^{ij}\pi_u\big(\frac{\partial}{\partial x_i},
\frac{\partial}{\partial x_j}\big)\,dv_g,$$
and we have used the fact that $E(u(t); g)$ is monotonically decreasing
in the last inequality.

We follow closely the presentation of Korevaar-Schoen \cite[pages 633-638]{Korevaar-Schoen1993} along with suitable modifications. Let $\omega$ be a unit vector field on a local coordinate chart $\Omega$ of $M$. Let $\{\overline{x}(x,s)\}_{s\ge 0}$ be the flow generated by $\omega\in\Gamma(T\Omega)$, that is, 
\[
\overline{x}(x,0)=x;  \ \frac{d}{ds}\overline{x}(x,s)=\omega(\overline{x}(x,s)).
\]
For $s>0$ small, define 
$$u_{s\omega}(x,t)=u(\overline{x}(x,s), t),
\ \forall z=(x,t)\in \Omega\times (0,\infty).$$
Then it is not hard to see that $u_{s\omega}$ is a suitable weak solution of the heat flow of harmonic maps from
$(M, g_{s\omega}=\overline{x}(\cdot, s)^*g)$ to $(X,d)$. In particular, 
$u_{s\omega}$ satisfies \eqref{EVI1} 
with $g$ replaced by $g_{s\omega}$, that is,
\begin{align}\label{EVI2}
&\int_M d^2(u_{s\omega}(x, t), w(x))\,dv_{g_{s\omega}}-\int_M d^2(u_{s\omega}(x, t-\delta), w(x))\,dv_{g_{s\omega}}\nonumber\\
&\le 2\int_{t-\delta}^{t} (E(w;g_{s\omega})-E(u(\tau);g_{s\omega}))\,d\tau\nonumber\\
&\le 2\delta (E(w; g_{s\omega})-E(u(t); g_{s\omega})), 
\ \forall w\in H^1(M, X), \ 0<\delta<t.
\end{align}

Let $\eta\in C_0^2(\Omega\times (0,\infty))$ with $0\le\eta<\frac12$. 
Take $v=u_{sw}$, and as in the previous section define $u_{\eta(x,t)}(x,t)$ to be the unique point on the geodesic connecting $u(x,t)$ to $v(x,t)$ such that
\[
d(u_{\eta(x,t)}(x,t), u(x,t))=\eta(x,t)d(u(x,t), v(x,t)).
\]
By choosing $w(x,t)=u_{\eta(x,t)}(x,t)$ in \eqref{EVI1}
and $w(x,t)=v_{\eta(x,t)}(x,t)(=u_{1-\eta(x,t)}(x,t))$ in
\eqref{EVI2} and adding the resulting inequalities, we have
that
\begin{align}\label{EVI20}
&\int_M \Big(d^2(u(x, t), u_{\eta(x,t)}(x,t))
-d^2(u(x, t-\delta), u_{\eta(x,t)}(x,t))\Big)\,dv_g
\nonumber\\
&\ + \int_M \Big(d^2(v(x, t), v_{\eta(x,t)}(x,t))-d^2(v(x, t-\delta), v_{\eta(x,t)}(x,t))\Big)\,dv_{g_{s\omega}}\nonumber\\
&\le 2\delta\Big(E(u_{\eta(t)}(t); g)+E(v_{\eta(t}(t);g_{s\omega})
-E(u(t);g)-E(v(t); g_{s\omega})\Big).
\end{align}
By regrouping terms in \eqref{EVI20}, we would have
\begin{align}\label{EVI4}
&\int_M \Big(d^2(u(x, t), u_{\eta(x,t)}(x,t))
-d^2(u(x, t-\delta), u_{\eta(x,t)}(x,t))\Big)\,dv_g
\nonumber\\
&\ + \int_M \Big(d^2(v(x, t), v_{\eta(x,t)}(x,t))-d^2(v(x, t-\delta), v_{\eta(x,t)}(x,t))\Big)\,dv_g
\nonumber\\
&\le 2\delta\Big(\int_M \big(\pi_{u_{\eta(t)}(t)}+\pi_{v_{\eta(t}(t)}\big)_{ij}g^{ij}dv_g
-\int_M \big(\pi_{u(t)}+\pi_{v(t)}\big)_{ij} g^{ij}dv_g\Big)\nonumber\\
&\quad+2\delta\int_M \Big(\pi_{u_{1-\eta(x,t)}(x,t)}-\pi_{v(x,t)})_{ij}\Big)
\Big((g^{ij}dv_g)_{s\omega}-g^{ij}dv_g\Big)\nonumber\\
&\quad+\int_M \Big(d^2(v(x, t), v_{\eta(x,t)}(x,t))-d^2(v(x, t-\delta), v_{\eta(x,t)}(x,t))\Big)\Big(dv_g-dv_{g_{s\omega}}\Big)\nonumber\\
&=I+II+III.
\end{align}

Now we want to estimate $I, II, III$ separately as follows. 
We can deduce from \cite[Lemma 2.4.5, Lemma 2.4.2]{Korevaar-Schoen1993} that
\begin{align}\label{I-est}
I\le 2\delta\Big(-\int_M \nabla\eta\cdot\nabla d^2(u, v)\,dv_g
-\int_M \mathcal{C}(u,v,\eta)_{ij}g^{ij}\,dv_g +\int_M Q(\eta,\nabla \eta)\,dv_g\Big),    
\end{align}
where 
$$\mathcal{C}(u,v,\eta)=\pi_{u}+\pi_v-\mathcal{P}(u,v,\eta)-\mathcal{P}(u,v,1-\eta),$$ 
and $\mathcal{P}(u,v,\eta)$ is the symmetric, bilinear, integrable tensor given by \cite[Lemma 2.4.4]{Korevaar-Schoen1993}:
\[
\frac{d^2\big(u_{\eta(x,t)}(x,t), u_{\eta(x,t)}(\overline{x}(x,\eps),t)\big)}
{\eps^2}\,dv_g\rightharpoonup \mathcal{P}(u,v,\eta)(Z,Z)\,dv_g
\]
for any smooth tangent vector field $Z\in\Gamma(TM)$. Here
\[
\frac{d}{ds}\overline{x}(x,s)=Z(\overline{x}(x,s)), \ 
\overline{x}(x,0)=x.
\]

We can estimate $III$ by
\begin{align}\label{III-est}
|III|&\le \int_M \Big|d^2(v(x, t), v_{\eta(x,t)}(x,t))-d^2(v(x, t-\delta), v_{\eta(x,t)}(x,t))\Big|\Big|dv_g-dv_{g_{s\omega}}\Big|\\
&\le Cs\int_M d(v(x, t), v(x, t-\delta))
(d(v(x, t), v_{\eta(x,t)}(x,t))+d(v(x, t-\delta), v_{\eta(x,t)}(x,t)))\,dv_g.
\nonumber
\end{align}

The most difficult term is $II$, which has been 
estimated by \cite[Theorem 2.4.6]{Korevaar-Schoen1993}. Here we sketch it for convenience of the reader. First, decompose
\begin{align*}
   \pi_{u_{1-\eta}}-\pi_{v}
   =\big(\pi_{u_{1-\eta}}-\mathcal{P}(u,v,1-\eta)\big)
   +\big(\mathcal{P}(u,v,1-\eta)-\eta\pi_{u}-(1-\eta)\pi_v\Big) 
   -\eta\Big(\pi_v-\pi_u\Big).
\end{align*}
And we can write
\begin{align*}
\int_M \eta \big(\pi_v-\pi_u\big)_{ij}\Big((g^{ij}dv_g)_{s\omega}-g^{ij}dv_g\Big)
&=\int_M \eta \big(\pi_{u_{s\omega}}-\pi_u\big)_{ij}\Big((g^{ij}dv_g)_{s\omega}-g^{ij}dv_g\Big)\\
&=\int_M \eta(\pi_u)_{ij}\Big(2g^{ij}dv_g-(g^{ij}dv_g)_{s\omega}
-(g^{ij}dv_g)_{-s\omega}\Big)\\
&\quad+\int_M (\pi_u)_{ij}\big(\eta_{-s\omega}-\eta\big)
\big(g^{ij}dv_g-(g^{ij}dv_g)_{-s\omega}\big).
\end{align*}
Thus, we can obtain
\begin{align*}
\Big|\int_M \eta \big(\pi_v-\pi_u\big)_{ij}\Big((g^{ij}dv_g)_{s\omega}-g^{ij}dv_g\Big)\Big|
\le Cs^2\int_{M}(\eta+|\nabla\eta|)|\nabla u|^2\,dv_g,
\end{align*}
where $C>0$ depends on the $C^{1,1}$-norm of $g$ under the coordinate system, or equivalently the bound of curvature.
While
\begin{align*}
&\Big|\int_M \big(\pi_{u_{1-\eta}}-\mathcal{P}(u,v,1-\eta)\big) 
\Big((g^{ij}dv_g)_{s\omega}-g^{ij}dv_g\Big)\Big|\\
&\le Cs\int_M \Big(|\nabla\eta|d(u,v) (|\nabla u|_1+|\nabla v|_1)+|Q(\eta,\nabla\eta)|\Big)\,dv_g,
\end{align*}
and
\begin{align*}
\Big|\int_M\big(\mathcal{P}(u,v,1-\eta)-\eta\pi_{u}-(1-\eta)\pi_v\Big) 
\Big((g^{ij}dv_g)_{s\omega}-g^{ij}dv_g\Big)\Big|
\le Cs\int_M \mathcal{C}(u,v,\eta) \,dv_g. 
\end{align*}
Putting these estimate together, we obtain that
\begin{align}
|II|&\le C\delta\Big[s\int_M \Big(|\nabla\eta|d(u,v) (|\nabla u|_1+|\nabla v|_1)+|Q(\eta,\nabla\eta)|\Big)\,dv_g +s^2\int_{M}(\eta+|\nabla\eta|)|\nabla u|^2\,dv_g\nonumber\\
&\qquad\qquad\qquad+s\int_M |\mathcal{C}(u,v,\eta)| \,dv_g\Big].
\label{II-est}
\end{align}

Next we need to estimate the left hand side of the inequality \eqref{EVI4}.
For this, we claim that
\begin{align}\label{NPC00} 
&\int_{M} \eta(x,t)\Big(
d^2(u(x,t), v(x,t))-d^2(u(x,t-\delta), v(x,t-\delta))\Big)\,dv_g\\
&\le 
\int_M\Big[d^2(u(x, t), u_{\eta(x,t)}(x,t))-d^2(u(x, t-\delta), u_{\eta(x,t)}(x,t))
+d^2(v(x, t), v_{\eta(x,t)}(x,t))\nonumber\\
&\ \ \ \  -d^2(v(x, t-\delta), v_{\eta(x,t)}(x,t))
+ d^2(u(x, t), u(x, t-\delta))+d^2(v(x, t), v(x, t-\delta))\Big]\,dv_g.
\nonumber
\end{align}

To see \eqref{NPC00}, recall that, for $P, Q, S\in X$,
let $P_\lambda =(1-\lambda)P+\lambda Q$, for $0\le \lambda\le 1$, 
be the unique point on the geodesic connecting $P$ to $S$
such that
\[
d(P_\lambda, P)=\lambda d(P,S).
\]
the following inequality follows directly from \eqref{NPC1}
\begin{align}\label{NPC111}
\lambda\Big(d^2(P,Q)+d^2(P,S)-d^2(Q,S)\Big)
\le d^2(P, Q)+d^2(P, P_\lambda)-d^2(Q, P_\lambda).
\end{align}
Applying \eqref{NPC111} with $P=u(x,t), Q=u(x,t-\delta), S=v(x,t)$,
and $\lambda=\eta(x,t)$, we have 
\begin{align}\label{NPC01}
&\int_M \eta(x,t)\Big[
d^2(u(x,t), u(x,t-\delta))+d^2(u(x,t), v(x,t))-d^2(u(x,t-\delta), v(x,t)\Big]\,dv_g
\\
&\le \int_{M}
\Big[d^2(u(x, t), u(x, t-\delta))+d^2(u(x, t), u_{\eta(x,t)}(x,t))-d^2(u_{\eta(x, t)}(x,t), u(x, t-\delta))\Big]\,dv_g.\nonumber
\end{align}
Similarly, applying \eqref{NPC11} with $P=v(x,t), Q=v(x,t-s), S=u(x,t)$,
and $\lambda=\eta(x,t)$, we have 
\begin{align}\label{NPC02}
&\int_M \eta(x,t)\Big[
d^2(v(x,t), v(x,t-s))+d^2(v(x,t), u(x,t))-d^2(v(x,t-s), u(x,t)\Big]\,dv_g
\\
&\le \int_{M}
\Big[d^2(v(x, t), v(x, t-\delta))+d^2(v(x, t), v_{\eta(x,t)}(x,t))-d^2(v_{\eta(x, t)}(x,t), v(x, t-\delta))\Big]\,dv_g.\nonumber
\end{align}
Also, recall the following inequality (see \cite[(2.1vi)]{Korevaar-Schoen1993}):
\begin{equation}\label{NPC03}
d^2(P, R) + d^2(Q, S)-d^2(P, Q)-d^2(R, S)
\le d^2(Q,R)+d^2(P,S)-(d(R,S)-d(P,Q))^2,
\end{equation}
Applying \eqref{NPC03} with $P=u(x,t), Q=v(x,t), R=v(x,t-s)$
and $S=u(x,t-s)$, we can deduce that
\begin{align}\label{NPC04}
&d^2(u(x,t), u(x,t-\delta))+d^2(u(x,t), v(x,t))
+d^2(u(x,t), v(x,t-\delta))-d^2(u(x,t-\delta), v(x,t))\nonumber\\
&-d^2(u(x,t), v(x,t-\delta))
\ge -d^2(u(x,t-\delta), v(x,t-\delta)) .
\end{align}
Hence, with the help of \eqref{NPC04}, adding \eqref{NPC01}
and \eqref{NPC02} together yields \eqref{NPC00}.    

Finally, we can combine \eqref{EVI4}, \eqref{I-est}, \eqref{II-est},
\eqref{III-est}, and \eqref{NPC00} to obtain that
\begin{align}\label{EVI5}
&\int_{M} \eta(x,t)\Big(
d^2(u(x,t), v(x,t))-d^2(u(x,t-\delta), v(x,t-\delta))\Big)\,dv_g\\
&\le \int_M \big[d^2(u(x, t), u(x, t-\delta))+d^2(v(x, t), v(x, t-\delta))\big]\,dv_g\nonumber\\
&+2\delta\Big(-\int_M \nabla\eta\cdot\nabla d^2(u, v)\,dv_g
-\int_M \Big(\mathcal{C}(u,v,\eta)_{ij}g^{ij}-Cs |\mathcal{C}(u, v,\eta)|\Big)\,dv_g \nonumber\\
&+C\delta\int_M |Q(\eta,\nabla \eta)|\,dv_g+C\delta s^2\int_M (\eta+|\nabla\eta|)|\nabla u|^2\,dv_g\nonumber\\
&+C\delta s\int_M \Big(|\nabla\eta|d(u,v) (|\nabla u|_1+|\nabla v|_1)+|Q(\eta,\nabla\eta)|\Big)\,dv_g\nonumber\\
&+Cs\int_M d(v(x, t), v(x, t-\delta))
(d(v(x, t), v_{\eta(x,t)}(x,t))+d(v(x, t-\delta), v_{\eta(x,t)}(x,t)))\,dv_g.\nonumber
\end{align}
Observe that since $\mathcal{C}(u,v,\eta)$ is positive-definite, bilinear tensor, we have that for sufficiently small $s>0$, it holds 
\[
\int_M \Big(\mathcal{C}(u,v,\eta)_{ij}g^{ij}-Cs |\mathcal{C}(u, v,\eta)|\Big)\,dv_g\ge 0.
\]
Also it is easy to show  that
\begin{align*}
&\int_M \big[d^2(u(x, t), u(x, t-\delta))+d^2(v(x, t), v(x, t-\delta))\big]\,dv_g\\
&\le
C\delta\int_{t-\delta}^t \int_M \big(|\partial_t u|^2+|\partial_t v|^2\big)\,dv_gdt.
\end{align*}
Substituting these two inequalities into \eqref{EVI5} and dividing
both sides of the resulting inequality by $\delta$ and then sending
$\delta\to 0$, we conclude that
\begin{align}\label{EVI6}
&\int_{M} \eta(x,t)\partial_t d^2(u(x,t), v(x,t))\,dv_g\nonumber\\
&\le -2\int_M \nabla\eta\cdot\nabla d^2(u,v)\,dv_g 
+\int_M |Q(\eta,\nabla\eta)|\,dv_g\nonumber\\
&+C\Big[s\int_M \Big(|\nabla\eta|d(u,v) (|\nabla u|_1+|\nabla v|_1)+|Q(\eta,\nabla\eta)|\Big)\,dv_g
+s^2\int_{M}(\eta+|\nabla\eta|)|\nabla u|^2\,dv_g\Big]\nonumber\\
&+Cs\int_M \eta |\partial_t v(x,t)|_1 d(u(x,t), v(x,t))\,dv_g.
\end{align}
Now we can replace $\eta$ by $\eps\eta$ and send $\eps\to 0$ to obtain
that 
\begin{align}\label{EVI7}
&\int_{M} \eta(x,t)\partial_t d^2(u(x,t), u_{s\omega}(x,t))\,dv_g\nonumber\\
&\le -2\int_M \nabla\eta\cdot\nabla d^2(u,u_{s\omega})\,dv_g \nonumber\\
&+C\Big[s\int_M \Big(|\nabla\eta|d(u,v) (|\nabla u|_1+|\nabla u_{s\omega}|_1)\Big)\,dv_g
+s^2\int_{M}(\eta+|\nabla\eta|)|\nabla u|^2\,dv_g\Big]\nonumber\\
&+Cs\int_M \eta |\partial_t u_{s\omega}|_1(x,t) d(u(x,t), u_{s\omega}(x,t))\,dv_g.
\end{align}
Finally, dividing both sides by $s^2$ and sending $s\to 0$, we have that
\begin{align}\label{EVI7}
&\int_{M} \eta\partial_t |\nabla u|^2\,dv_g+2\int_M \nabla\eta\cdot\nabla |\nabla u|^2\,dv_g \nonumber\\
&\le C\int_{M}(\eta+|\nabla\eta|)|\nabla u|^2\,dv_g
+C\int_M \eta |\partial_t u|_1 |\nabla u|_1\,dv_g\nonumber\\
&\le C\int_M (\eta+|\nabla\eta|)|\nabla u|^2\,dv_g
+C\int_M \eta |\partial_t u|^2\,dv_g,
\end{align}
where we use the H\"older inequality in the last step.
Integrating over $t$, this is equivalent to

\begin{align}\label{EVI7}
&\iint_{M\times (0,\infty)} |\nabla u|^2(\partial_t \eta+2\Delta \eta+C\eta
+C|\nabla \eta|)\,dv_gdt\ge -C\iint_{M\times (0,\infty)}\eta |\partial_t u|^2\,dv_gdt,
\end{align}
holds for all nonnegative $\eta\in C^\infty_{0}(M\times (0,\infty))$.

We would like to point out that  
this parabolic inequality can yield the 
$L^\infty$ -estimate of $|\nabla u|^2$,
if we can obtain the $L^\infty$-estimate of $|\partial_t u|^2$,
which is shown in section 3 below.

\section{Sub-Caloricity of $|\partial_t u|^2$}
In this section, we sketch how to use the EVI \eqref{EVI1} 
to derive sub-caloricity of $|\partial_t u|^2$ for any suitable weak
solution to the heat flow of harmonic maps into CAT(0)-space
$(X,d)$. Note that this property was previously deduced by
\cite{ZhangZhu2026}. Our derivation is slightly different
from \cite{ZhangZhu2026}.

Observe that for any $\delta>0$, $v(x,t)=u(x,t+\delta):M\times [0,\infty)\to (X,d)$ is also a suitable weak solution of the heat flow of harmonic maps,
with the initial condition $v(x,0)=u(x,\delta)\in H^1(M, X)$.
For any $\phi\in C_0^\infty(M\times (0,\infty))$, with $0\le\phi\le 1$,
 let $u_{\phi(z)}(z)$ be the unique point on
the geodesic joining $u(z)$ to $v(z)$
for $z\in M\times (0,\infty)$, satisfying
\[
d(u_{\phi(z)}(z),u(z))=\phi(z) d(u(z), v(z)),
\]
and $v_{\phi(z)}(z)$ be the unique point on
the geodesic joining $u(z)$ to $v(z)$
for $z\in M\times (0,\infty)$, satisfying
\[
d(v_{\phi(z)}(z),v(z))=\phi(z) d(u(z), v(z)).
\]
It is easy to see that 
$$
v_{\phi(z)}(z)=u_{1-\phi(z)}(z), \  \forall \ z\in 
M\times (0,\infty).
$$

Applying \eqref{EVI2} to both solutions $u$ and $v$, we obtain
that for $0<s<t$, it holds
\begin{align}\label{EVI3}
&d_2^2(u(t), u_\phi(t))-d_2^2(u(t-s), u_\phi(t))
+d_2^2(v(t), v_\phi(t))-d_2^2(v(t-s), v_\phi(t))\nonumber\\
&\qquad\qquad\qquad\qquad\qquad\ \ \ \le 
2s\big[E(u_\phi(t))-E(u(t))+E(v_\phi(t))-E(v(t))\big].    
\end{align}

Now we claim that
\begin{align}\label{energy_comparison}
 E(u_\phi(t))-E(u(t))+E(v_\phi(t))-E(v(t)) 
\le -\int_{M\times\{t\}}\nabla\phi\cdot\nabla\big((1-2\phi)d^2(u, v)\big)\,dv_g
\end{align}
holds for any $\phi\in C_0^\infty(M\times (0,\infty))$ with
$0\le\phi\le 1$.

The proof of \eqref{energy_comparison} is based on Reshetnyak's quadrilateral comparison property for $d^2$, see \cite[Theorem 2.1.2]{Korevaar-Schoen1993}.  The original proof was given 
by \cite[Lemma 3.2]{ZhangZhu2026}. Here we sketch a slightly different derivation just for convenience of the interested reader. 

For $x,y\in M, t>0$, denote $P=u(x,t), Q=u(y,t),
R=v(y,t), S=v(x,t)$, $\lambda=\phi(x,t)$, $\mu=\phi(y,t)$,
$P_\lambda=u_{\phi}(x,t)$, $P_{1-\lambda}=v_\phi(x,t)$,
$Q_\mu=u_{\phi}(y,t)$, and $Q_{1-\mu}=v_\phi(y,t)$.
Then 
\begin{align}\label{comparison10}
&d^2(P_\lambda, Q_\mu)+d^2(P_{1-\lambda}, Q_{1-\mu})\nonumber\\
&\le \Big[\mu(1-\lambda)d^2(P,R)+\lambda(1-\mu)d^2(Q,S)+\lambda\mu d^2(R,S)
\nonumber\\
&\ \ \ +(1-\lambda)(1-\mu)d^2(P,Q)-\lambda(1-\lambda)d^2(P,S)-\mu(1-\mu)d^2(Q,R)\Big]\nonumber\\
&\ \ \ +\Big[\lambda(1-\mu)d^2(P,R)+\mu(1-\lambda)d^2(Q,S)+(1-\lambda)(1-\mu) d^2(R,S)
\nonumber\\
&\ \ \ + \ \lambda\mu d^2(P,Q)-\lambda(1-\lambda)d^2(P,S)-\mu(1-\mu)d^2(Q,R)\Big]\nonumber\\
&=d^2(P,Q)+d^2(R,S)+(\lambda+\mu-2\lambda\mu)\Big[d^2(P,R)+d^2(Q,S)-d^2(P,Q)-d^2(R,S)\Big]\nonumber\\
&\ \ \ -2\lambda(1-\lambda) d^2(P, S)-2\mu(1-\mu)d^2(Q,R)\nonumber\\
&\le d^2(P,Q)+d^2(R,S)+(\lambda+\mu-2\lambda\mu)\Big[d^2(Q,R)+d^2(P,S)
-(d(R,S)-d(P,Q))^2\Big]\nonumber\\
&\ \ \ -2\lambda(1-\lambda) d^2(P, S)-2\mu(1-\mu)d^2(Q,R)\nonumber\\
&\le d^2(P,Q)+d^2(R,S)-(\lambda-\mu)\Big[(1-2\lambda)d^2(P,S)-(1-2\mu)d^2(Q,R)\Big]
\nonumber\\
&\ \ \ -[\lambda+\mu-2\lambda\mu](d(R,S)-d(P,Q))^2\nonumber\\
&\le d^2(P,Q)+d^2(R,S)-(\lambda-\mu)\Big[(1-2\lambda)d^2(P,S)-(1-2\mu)d^2(Q,R)\Big],
\end{align}
where have used the following inequality (see \cite[(2.1vi)]{Korevaar-Schoen1993}):
\begin{equation}\label{NPC9}
d^2(P, R) + d^2(Q, S)-d^2(P, Q)-d^2(R, S)
\le d^2(Q,R)+d^2(P,S)-(d(R,S)-d(P,Q))^2,
\end{equation}
and 
\[
\lambda+\mu-2\lambda\mu\ge \lambda^2+\mu^2-2\lambda\mu=(\lambda-\mu)^2\ge 0,
\]
since $0\le\lambda, \mu\le 1$.

It is readily seen, after substituting the values 
of $P, Q, R, S$ and
$\lambda, \mu$, that \eqref{comparison10} yields
\begin{align}\label{comparison11}
&d^2(u_\phi(x,t), u_\phi(y,t))
+d^2(v_\phi(x,t), v_\phi(y,t))
-d^2(u(x,t), u(y,t))-d^2(v(x,t), v(y,t))\\
&\le
-\big(\phi(x,t)-\phi(y,t)\big)\big[
(1-2\phi(x,t))d^2(u(x,t),v(x,t))-(1-2\phi(y,t))d^2(u(y,t),v(y,t))\big].\nonumber
\end{align}
This, combined with the definition of energy density
of the maps $u, v, u_\phi, v_\phi$ by \cite{Korevaar-Schoen1993}, immediately
implies \eqref{energy_comparison}.
    
Next we claim that for any $0<s<t$ and $\phi\in C_0^\infty(M\times (0,\infty))$ with $0\le\phi\le 1$, it holds 
\begin{align}\label{subcaloricity1}
&\int_{M} \phi(x,t)\Big(
d^2(u(x,t), v(x,t))-d^2(u(x,t-s), v(x,t-s))\Big)\,dv_g\nonumber\\
&\le d^2_2(u(t), u(t-s))+d^2_2(v(t), v(t-s))\nonumber\\
&\ \ \ +2s\Big(E(u_\phi(t))+E(v_\phi(t))-E(u(t))-E(v(t))\Big).
\end{align}
\eqref{subcaloricity1} follows from \eqref{EVI3}, if we can show
\begin{align}\label{NPC10}    
&\int_{M} \phi(x,t)\Big(
d^2(u(x,t), v(x,t))-d^2(u(x,t-s), v(x,t-s))\Big)\,dv_g\nonumber\\
&\le 
d_2^2(u(t), u_\phi(t))-d_2^2(u(t-s), u_\phi(t))
+d_2^2(v(t), v_\phi(t))-d_2^2(v(t-s), v_\phi(t))\nonumber\\
&\ \ \ + d^2_2(u(t), u(t-s))+d^2_2(v(t), v(t-s)).
\end{align}
To see \eqref{NPC10}, first recall the inequality \eqref{NPC111}:
\[
\lambda\Big(d^2(P,Q)+d^2(P,S)-d^2(Q,S)\Big))
\le d^2(P, Q)+d^2(P, P_\lambda)-d^2(Q, P_\lambda).
\]
Choosing $P=u(x,t), Q=u(x,t-s), S=v(x,t)$, and $\lambda=\phi(x,t)$,
we obtain that
\begin{align}\label{NPC11}
&\int_M \phi(x,t)\Big[
d^2(u(x,t), u(x,t-s))+d^2(u(x,t), v(x,t))-d^2(u(x,t-s), v(x,t)\Big]\,dv_g
\nonumber\\
&\qquad\qquad\qquad\le d^2_2(u(t), u(t-s))+d^2_2(u(t), u_\phi(t))-d^2_2(u_\phi(t), u(t-s)),
\end{align}
and, similarly, by choosing $P=v(x,t), Q=v(x,t-s), S=u(x,t)$,
and $\lambda=\phi(x,t)$, we have that
\begin{align}\label{NPC12}
&\int_M \phi(x,t)\Big[
d^2(v(x,t), v(x,t-s))+d^2(v(x,t), u(x,t))-d^2(v(x,t-s), u(x,t)\Big]\,dv_g
\nonumber\\
&\qquad\qquad\qquad\le d^2_2(v(t), v(t-s))+d^2_2(v(t), v_\phi(t))-d^2_2(v_\phi(t), v(t-s)).
\end{align}
From \eqref{NPC9}, we have 
\begin{align}\label{NPC13}
&d^2(u(x,t), u(x,t-s))+d^2(u(x,t), v(x,t))
+d^2(u(x,t), v(x,t-s))-d^2(u(x,t-s), v(x,t))\nonumber\\
&-d^2(u(x,t), v(x,t-s))
\ge -d^2(u(x,t-s), v(x,t-s)) .
\end{align}
Hence, with the help of \eqref{NPC13}, adding \eqref{NPC11}
and \eqref{NPC12} together yields \eqref{NPC10}.

Observe that
\[
d^2_2(u(t), u(t-s))\le 
Cs\int_{t-s}^t \int_M |\partial_t u(x,\tau)|^2\,dv_gd\tau,
\]
\[
d^2_2(v(t), v(t-s))\le Cs\int_{t-s}^t 
\int_M |\partial_t u(x,\tau)|^2\,dv_gd\tau.
\]
Hence, after dividing both sides of \eqref{subcaloricity1} by $s$
and sending
$s\to 0^+$ and applying \eqref{energy_comparison},  we obtain 
\begin{align}\label{subcaloricity2}
&\int_M \partial_t d^2(u(x,t), v(x,t))\phi(x,t)\,dv_g\nonumber\\
&\le -2\int_{M}\nabla\phi(x,t)\cdot\nabla\Big((1-2\phi(x,t))d^2(u(x,t), v(x,t))\Big)\,dv_g.
\end{align}
If we first substitute $\phi$ by $\varepsilon\phi$ in \eqref{subcaloricity2} and then divide both sides of the resulting
inequality by $\eps$, and then send $\varepsilon\to 0$, and finally integrate the resulting inequality
over $t\in (0,\infty)$, we arrive at
\begin{align}\label{subcaloricity3}
&\iint_{M\times (0,\infty)} \partial_t d^2(u(x,t), v(x,t))\phi(x,t)\,dv_gdt
\nonumber\\
&\le -2\iint_{M\times (0,\infty)}\nabla\phi(x,t)\cdot\nabla \big(d^2(u(x,t), v(x,t))\big)\,dv_gdt.
\end{align}
Since $v(x,t)=u(x,t+\delta)$, \eqref{subcaloricity3} is equivalent to
\begin{align}\label{subcaloricity4}
(\partial_t-2\Delta)   d^2(u(x,t), u(x,t+\delta))\ge 0  
\end{align}
holds weakly in $M\times (0,\infty)$. 

After dividing both sides of \eqref{subcaloricity4} by $\delta^2$ and 
then sending $\delta\searrow 0$, we obtain that $|\partial_t u|^2$
weakly solves
\begin{align}\label{subcaloricity5}
(\partial_t-2\Delta)|\partial_t u|^2\ge 0
\end{align}
in $M\times (0,\infty)$.

\noindent{\bf Proof of Theorem \ref{Lip1}}: Since
$0\le f_\delta(x,t)=d^2(u(x,t), u(x,t+\delta))\in H^1_{\rm{loc}}(M\times (0,\infty))$ weakly solves \eqref{subcaloricity4}, it follows from the
Harnack inequality for the heat equation that
for any $z_0=(x_0,t_0)\in M\times (0,\infty)$, 
\begin{equation}\label{Harnack1}
\sup_{P_R(z_0)} d^2(u(x,t), u(x,t+\delta))
\le CR^{-(n+2)}\int_{P_{2R}(z_0)}d^2(u(y,\tau), u(y,\tau+\delta))
\,dv_g d\tau
\end{equation}
holds for all $0<R<\frac12 \min\big\{\sqrt{t_0}, \ {\rm{inj}}(M)\big\}.$
Here $P_R(z_0)=B_R(x_0)\times (t_0-R^2, t_0+R^2)$ is the parabolic ball
centered at $z_0$ with radius $R$. 

Dividing both sides of \eqref{Harnack1} by $\delta^2$ and sending
$\delta>0$, we obtain the local $L^\infty$-bound of $|\partial_t u|^2$:
\begin{equation}\label{Harnack2}
\sup_{P_R(z_0)} |\partial_t u|^2
\le CR^{-(n+2)}\int_{P_{2R}(z_0)}|\partial_t u|^2\,dv_g dt
\le CR^{-(n+2)} E(u_0).
\end{equation}

Substituting the estimate \eqref{Harnack1} into \eqref{EVI7}, we obtain
that 
\begin{align}\label{EVI8}
&\iint_{P_R(z_0)} |\nabla u|^2\Big[\big(\partial_t+2\Delta\big) \eta+C\eta
+C|\nabla \eta|\Big]\,dv_gdt\ge -C_R\iint_{P_R(z_0)}\eta\,dv_gdt
\end{align}
holds for all nonnegative $\eta\in C^\infty_{0}(P_R(z_0))$, with $C_R
=CR^{-(n+2)}E(u_0)>0$. 

Now we can follow the proof of Moser
\cite{Moser1964} to show that $|\nabla u|^2$ is bounded
in $P_{\frac{R}2}(z_0)$, and
\begin{align}\label{Harnack3}
\sup_{P_{\frac{R}2}(z_0)} |\nabla u|^2
&\le C\Big(R^{-(n+2)}\int_{P_{R}(z_0)}|\nabla u|^2\,dv_g dt
+R^{-(n+2)}E(u_0)\Big)\nonumber\\
&\le CR^{-(n+2)}(1+R^2) E(u_0)
\end{align}
holds for all $0<R<\frac12 \min\big\{\sqrt{t_0},\ {\rm{inj}}(M)\big\}.$
This completes the proof. \qed

\bigskip
\section*{Acknowledgements}
The first author is partially supported by NSF DMS 2247773.
The second author is partially supported by NSF DMS 2453789 and Simons Travel Grant TSM-00007723. 
The authors would like to thank Xi-Ping Zhu and Hui-Chun Zhang for your interests and comments on the paper.

\end{document}